\documentclass[12pt, english]{amsart}

\usepackage{dsfont,amssymb,amsfonts}
%\frenchspacing

\makeatletter
\def\msection{\@startsection{section} %name
	{1} % level
	{0pt} % indent
	{-1ex plus -.1ex minus -0.9ex} % beforeskip
	{-.9ex plus -.2ex} % afterskip
	{\bfseries} % style 
}
\def\msubsection{\@startsection{subsection} %name
	{2} % level
	{0pt} % indent
	{-1ex plus -.1ex minus -0.2ex} % beforeskip
	{-.9ex plus -.2ex} % afterskip
	{\normalfont} % style
} 
\makeatother 

\usepackage[text={6in,9in},centering]{geometry}

\usepackage[dvipsnames]{xcolor}   
\usepackage{xparse}
\usepackage{xr-hyper}
\definecolor{brightmaroon}{rgb}{0.76, 0.13, 0.28}
\usepackage[linktocpage=true,colorlinks=true,hyperindex,citecolor=BrickRed,linkcolor=blue]{hyperref}

\newtheorem{theorem}{Theorem}[section] 

\newtheorem{lemma}[theorem]{Lemma}
 
\theoremstyle{definition}

\newtheorem{remark}[theorem]{Remark}

\begin{document}

\title{Spectral spaces of normal subgroups}

\subjclass{20D25, 54B35}

%special subgroups

%Spectra in general topology

\keywords{normal subgroups; spectral spaces}

\author{Amartya Goswami}
\address{[1] Department of Mathematics and Applied Mathematics, University of Johannesburg, South Africa.
[2] National Institute for Theoretical and Computational Sciences (NITheCS), Johannesburg, South Africa.}
\email{agoswami@uj.ac.za}
\maketitle

\begin{abstract} The aim of this note is to prove that the set of proper normal subgroups of a group endowed with coarse lower topology is a spectral space. 
\end{abstract}

\smallskip

\section{Introduction and Preliminaries} 
\smallskip

According to \cite{H69}, a \emph{spectral space} is a topological space that is  quasi-compact, sober,  admitting a basis of quasi-compact open subspaces that is closed under finite intersections.  The following interesting result in \cite[Proposition 3.9]{FGT22} completely characterizes the spectral space of $\mathrm{Spec}(G)$ of prime normal subgroups of a group $G$. The spectrum $\mathrm{Spec}(G)$ endowed with Zariski topology is spectral if and only if it is compact, if and
only if $G$ has a maximal normal subgroup. Our aim in this paper is to prove the following.

\begin{theorem}\label{mth}
Let $G$ be a group having a maximal normal subgroup. Then the set $\mathcal{N}^+(G)$ of proper normal subgroups of $G$ endowed with coarse lower topology is a spectral space.
\end{theorem}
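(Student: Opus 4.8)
The plan is to verify spectrality via Finocchiaro's ultrafilter characterization: a topological space $X$, equipped with a subbasis $\mathcal S$ of open sets, is spectral if and only if for every ultrafilter $\mathcal U$ on the underlying set of $X$ the set $X_{\mathcal U}:=\{x\in X:\ (\forall S\in\mathcal S)\ (x\in S\Leftrightarrow S\in\mathcal U)\}$ is nonempty. I would run this for $X=\mathcal N^{+}(G)$ with the subbasis $\mathcal S=\{D_g:g\in G\}$, where $D_g=\{M\in\mathcal N^{+}(G):g\notin M\}$, and I write $V_g:=\mathcal N^{+}(G)\setminus D_g=\{M\in\mathcal N^{+}(G):g\in M\}$. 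Since $\{M:N\nsubseteq M\}=\bigcup_{g\in N}D_g$, this $\mathcal S$ does generate the coarse lower topology, and the fact that two distinct normal subgroups are separated by some $D_g$ shows $X$ is $T_0$. So the whole problem reduces to: given an ultrafilter $\mathcal U$ on $\mathcal N^{+}(G)$, exhibit a \emph{proper} normal subgroup $M$ of $G$ with $g\in M\Leftrightarrow V_g\in\mathcal U$ for every $g\in G$.

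The candidate is essentially forced: set $M:=\{g\in G:V_g\in\mathcal U\}$. First I would check that $M$ is a normal subgroup, which is routine: $V_1=\mathcal N^{+}(G)\in\mathcal U$ gives $1\in M$, and the inclusions $V_g\cap V_h\subseteq V_{gh}$, $V_g\subseteq V_{g\inv}$, $V_g\subseteq V_{aga\inv}$, together with closure of $\mathcal U$ under finite intersections and under enlargement, give closure of $M$ under products, inverses and conjugation. Once $M$ is a normal subgroup, the equivalence $g\in M\Leftrightarrow V_g\in\mathcal U$ holds by construction, so $M\in X_{\mathcal U}$ as soon as $M$ is proper. Hence everything collapses to a single point, which I expect to be the genuine obstacle: \emph{why is $M\neq G$?}

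For that step I would bring in the maximal normal subgroup $\mathfrak m$ of $G$ — this is the only place the hypothesis is used, and it is used for the same reason it governs quasi-compactness of $\mathrm{Spec}(G)$ in \cite[Proposition 3.9]{FGT22}. Maximality gives $\{M'\in\mathcal N^{+}(G):\mathfrak m\subseteq M'\}=\{\mathfrak m\}$, so $\{\mathfrak m\}$ is closed and $\mathcal N^{+}(G)\setminus\{\mathfrak m\}=\bigcup_{g\in\mathfrak m}D_g$ is open. Now split on the ultrafilter dichotomy: if $\{\mathfrak m\}\in\mathcal U$, then $\mathcal U$ is principal at $\mathfrak m$ and one sees at once that $M=\mathfrak m$, which is proper. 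If instead $\mathcal N^{+}(G)\setminus\{\mathfrak m\}\in\mathcal U$, I would argue by contradiction: were $M=G$, then $V_g\in\mathcal U$ for every $g$, and intersecting finitely many such $V_g$ with $g\in\mathfrak m$ against $\mathcal N^{+}(G)\setminus\{\mathfrak m\}$ would — passing to the simple quotient $G/\mathfrak m$ — force a proper normal subgroup containing a prescribed finite subset of $\mathfrak m$ and different from $\mathfrak m$; the maximality of $\mathfrak m$ is precisely what prevents the "escape to $G$" that would otherwise be needed, and this should close out the contradiction, giving $M\neq G$. With properness secured, $M\in X_{\mathcal U}$, Finocchiaro's criterion applies, and $\mathcal N^{+}(G)$ with the coarse lower topology is spectral; alternatively one could feed the maximal normal subgroup hypothesis directly into a verification of Hochster's three conditions, where the delicate condition is again quasi-compactness.
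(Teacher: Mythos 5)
Your overall strategy---Finocchiaro's ultrafilter criterion applied to the subbasis $\{D_g\}_{g\in G}$---is a legitimate alternative to the paper's route, which instead proves that $\mathcal{N}(G)$ is spectral as an algebraic lattice and then exhibits $\mathcal{N}^+(G)$ as a quasi-compact, sober, open subspace of it. Your reduction is also correct: $M=\{g\in G: V_g\in\mathcal{U}\}$ is always a normal subgroup satisfying $g\in M\Leftrightarrow V_g\in\mathcal U$, so everything hinges on $M$ being proper. But the argument you sketch for Case 2 cannot be completed, because properness of $M$ genuinely fails under the hypothesis ``$G$ has a maximal normal subgroup.'' Take $G=\mathbb{Z}(p^\infty)\times C_2$, the direct product of a Pr\"ufer group with a group of order $2$; its unique maximal (normal) subgroup is $\mathfrak m=\mathbb{Z}(p^\infty)\times\{1\}$. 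Every finitely generated subgroup of $G$ is finite, hence contained in some $H_n\times C_2$ (with $H_n\leq\mathbb{Z}(p^\infty)$ of order $p^n$), which is a proper subgroup different from $\mathfrak m$. Consequently the family $\{V_g: g\in G\}\cup\{\mathcal{N}^+(G)\setminus\{\mathfrak m\}\}$ has the finite intersection property and extends to an ultrafilter $\mathcal U$ for which $M=G$. Your hoped-for contradiction (``maximality of $\mathfrak m$ prevents the escape to $G$'') does not materialize: a prescribed finite subset of $\mathfrak m$ can perfectly well sit inside a proper normal subgroup other than $\mathfrak m$, namely $H_n\times C_2$, and nothing forces anything up to $G$. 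The same family of subbasic closed sets shows directly that $\mathcal{N}^+(G)$ is not even quasi-compact for this $G$, so no proof of the theorem as literally stated can exist.

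The trouble originates in the paper itself: Theorem \ref{mth} is stated for groups with a maximal normal subgroup, but Section 2 silently imposes the stronger standing hypothesis that $G$ is the normal closure of a finitely generated subgroup, and that is the hypothesis its quasi-compactness argument actually invokes. Under that hypothesis your proof closes at once and no case split on $\mathfrak m$ is needed: if $G=\langle g_1,\dots,g_n\rangle^G$ and $M$ were all of $G$, then $V_{g_1}\cap\dots\cap V_{g_n}=\mathcal{V}\bigl(\langle g_1,\dots,g_n\rangle^G\bigr)\cap\mathcal{N}^+(G)=\emptyset$ would belong to $\mathcal U$, which is absurd; hence $M$ is proper, $X_{\mathcal U}\neq\emptyset$, and the criterion applies (together with the $T_0$ check you already made). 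I recommend you adopt that hypothesis explicitly---it implies the existence of a maximal normal subgroup by Zorn's lemma, but, as the example above shows, not conversely---rather than trying to repair Case 2.
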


In this paper, the identity element of a group $G$ will be denoted by $1$. The notation $\mathcal{N}(G)$ will denote the set of all normal subgroups of $G$.  If $\{N_{\lambda}\}_{\lambda \in \Lambda}$ is a collection of normal subgroups of $G$, then we shall denote their \emph{join} (\textit{i.e.}, the normal subgroup generated by $\bigcup_{\lambda \in \Lambda}N_{\lambda}$) by   $\bigvee_{\lambda \in \Lambda}N_{\lambda},$ which is also a normal subgroup of $G$. We say a proper normal subgroup $M$ of a group $G$ is \emph{maximal} if there is no proper normal subgroup of $G$ that properly contains $M$. 
Note that any nontrivial finite group has maximal normal
subgroups. 
However, this may not be
true for infinite groups, as the example of the Prüfer group shows.
%may not be true for infinite groups. For example, in the Pr\"{u}fer group: $$\mathbb{Z}_{p^{\infty}}=\lim_{\longleftarrow}\mathbb{Z}_{p^{i}}\qquad (p\;\text{is a prime}),$$ all subgroups are normal, however, none of them is maximal. 
This is in contrast with unitary rings, where the axiom of choice always guarantees existence of maximal ideals. 

Let us now recall a few definitions from topology. A space is called \emph{quasi-compact} if every open cover of it has a finite subcover, or equivalently, the space satisfies the finite intersection property. In this definition of quasi-compactness, we do not assume the space is $T_2.$ A closed subset $S$ of a space $X$ is called \emph{irreducible} if $S$ is not
the union of two properly smaller closed subsets of X and $S\neq \emptyset$.  A  space $X$ is called \emph{sober} if every irreducible closed subset $\mathcal K$ of $X$ is of the form: $\mathcal K=\mathcal{C}(\{x\})$, the closure of a unique singleton set $\{x\}$.

\section{The space $\mathcal{N}^+(G)$}
\smallskip

Let $G$ be a group such that $G$ is the normal closure of a finitely generated subgroup of $G$. The coarse lower topology on  $\mathcal{N}(G)$ is the topology for which the sets of the type 
\[
\mathcal{V}(S)=
\{N\in \mathcal{N}(G)\mid S\subseteq N \}, \qquad  (S\in \mathcal{N}(G))	
\]
form a subbasis of closed sets, 
and we use the  same notation $\mathcal{N}(G)$ to denote the space. The set $\mathcal{N}^+(G)$ with the subspace topology of the space $\mathcal{N}(G)$ is our object of study. To prove Theorem \ref{mth} for $\mathcal{N}^+(G)$, we need to show the following:
\begin{enumerate}
\item The space $\mathcal{N}^+(G)$ is quasi-compact.

\item The space $\mathcal{N}^+(G)$ is sober.

\item The coarse lower topology on $\mathcal{N}^+(G)$ admits a basis of quasi-compact open subspaces that is closed under finite intersections.
\end{enumerate}

Thanks to the next lemma, the checking of (3) can be avoided.
\smallskip

\begin{lemma}\label{cso}
A quasi-compact, sober, open subspace of a spectral space is spectral. 
\end{lemma}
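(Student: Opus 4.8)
The plan is to verify the three defining conditions of a spectral space for the open subspace directly. Write $X$ for the ambient spectral space and $U$ for the given open subspace. By hypothesis $U$ is already quasi-compact and sober, so the only thing left to produce is a basis of quasi-compact open subsets of $U$ that is closed under finite intersections.

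The observation that makes this go through is that, because $U$ is open in $X$, a subset of $U$ is open in the subspace topology of $U$ exactly when it is open in $X$; together with the fact that quasi-compactness of a space does not depend on any ambient space, this shows that the quasi-compact open subsets of $U$ are precisely the quasi-compact open subsets of $X$ that happen to be contained in $U$. So I would fix a basis $\mathcal{B}$ of quasi-compact open subsets of $X$ closed under finite intersections (available since $X$ is spectral) and pass to $\mathcal{B}_U = \{\, V \in \mathcal{B} : V \subseteq U \,\}$.

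It then remains to check that $\mathcal{B}_U$ is a basis of $U$ with the required closure property, and to conclude. For the basis claim: if $W$ is open in $U$ and $x \in W$, then $W$ is open in $X$, so some $V \in \mathcal{B}$ satisfies $x \in V \subseteq W \subseteq U$, i.e.\ $V \in \mathcal{B}_U$. Each element of $\mathcal{B}_U$ is a quasi-compact open subset of $U$ by the previous paragraph, and $\mathcal{B}_U$ is closed under finite intersections because $\mathcal{B}$ is and because $V_1, V_2 \subseteq U$ forces $V_1 \cap V_2 \subseteq U$; if one insists on including the empty intersection, one adjoins $U$ itself, which is quasi-compact by hypothesis and meets everything in $\mathcal{B}_U$ trivially. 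Since $U$ is already quasi-compact and sober, the characterisation of spectral spaces recalled in the introduction now applies and $U$ is spectral. I do not anticipate a genuine obstacle: the argument is a careful unwinding of the two subspace topologies involved, and the only points that need attention are that ``open in $U$'' coincides with ``open in $X$'' here (a step that would fail for non-open subspaces) and that it is exactly the quasi-compactness hypothesis on $U$ that supplies the whole space as a basis element should the definition require it.
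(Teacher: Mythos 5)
Your proposal is correct and follows essentially the same route as the paper's proof: both exploit that openness of the subspace makes ``open in $U$'' coincide with ``open in $X$'' and that quasi-compactness is intrinsic, then obtain the required basis of $U$ by restricting a quasi-compact open basis of $X$ to those members contained in $U$. The only cosmetic difference is that the paper works with the full collection of quasi-compact open subsets rather than a chosen basis $\mathcal{B}$.
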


\begin{proof}
Suppose that $S$ is a quasi-compact, sober, open subspace of a spectral space $X$. Since $S$ is quasi-compact and sober, it is sufficient to prove that the set $\mathcal{O}_{\scriptscriptstyle S}$ of compact open subsets of $S$ forms a basis of a topology that is closed under finite intersections. It is obvious that a subset $T$ of $S$ is open in $S$ if and only if $T$ is open in $X$, and hence a subset $T$ of $S$ belongs to $\mathcal{O}_{\scriptscriptstyle S}$ if and only if $T$ belongs to $\mathcal{O}_{\scriptscriptstyle X}.$ Now 
using these facts, we argue as follows.
Let $U$ be an open subset of $S$. Since $U$ is also open in $X$, we have $U=\cup\, \mathcal{U}$ for some subset $\mathcal{U}$ of $\mathcal{O}_{\scriptscriptstyle X}.$ But each element of $\mathcal{U}$ being a subset of $U$ is a subset of $S$, and it belongs to $\mathcal{O}_{\scriptscriptstyle S}.$ Therefore, every open subset of $S$ can be presented as a union of compact open subsets of $S$. Now it remains to prove that $\mathcal{O}_{\scriptscriptstyle S}$ is closed under finite intersections, but this immediately follows from the fact that $\mathcal{O}_{\scriptscriptstyle X}$ is closed under finite intersections. 
\end{proof} 
\smallskip

It is now clear that to prove Theorem \ref{mth}, it is sufficient to check the conditions in Lemma \ref{cso}. Therefore, our objective is now to verify the following:

\begin{enumerate}

\item \label{ngs}
$\mathcal{N}(G)$ is a spectral space;

\item \label{oco} $\mathcal{N}^+(G)$ is quasi-compact;

\item \label{tso} $\mathcal{N}^+(G)$ is sober.

\item \label{top}  $\mathcal{N}^+(G)$ is an open subspace of the space $\mathcal{N}(G)$.
\end{enumerate}

\begin{proof} 
1. Since $\mathcal{N}(G)$ is an algebraic lattice, the desired spectrality follows from \cite[Theorem 4.2]{P94}.

2.
Let  $\{K_{ \lambda}\}_{\lambda \in \Lambda}$ be a family of subbasic closed sets of a normal structure space $\mathcal{N}^+(G)$   such that $\bigcap_{\lambda\in \Lambda}K_{ \lambda}=\emptyset.$ Let $\{N_{ \lambda}\}_{\lambda \in \Lambda}$ be a family of normal subgroups of $\mathcal{N}(G)$ such  that $\forall \lambda \in \Lambda,$  $K_{ \lambda}=\mathcal{V}(N_{ \lambda}).$  Since $\bigcap_{\lambda \in \Lambda}\mathcal{V}(N_{ \lambda})=\mathcal{V}\left(\bigcup_{\lambda \in \Lambda}N_{ \lambda}\right),$ we get  $\mathcal{V}\left(\bigcup_{\lambda \in \Lambda}N_{ \lambda}\right)=\emptyset.$ This implies that the normal subgroup $ \bigvee_{\lambda \in \Lambda}N_{ \lambda}$ generated by $\bigcup_{\lambda \in \Lambda}N_{ \lambda}$ must be equal to $G$. Then,  by hypothesis on $G$ and using Alexander Subbasis Theorem, we have the desired quasi-compactness.  	
	
3. To show the existance of generric points of irreducible closed subsets of $\mathcal{N}^+(G)$, it is sufficient to show that $\mathcal{V}(N)=\mathcal{C}(N)$, whenever $N\in\mathcal{V}(N)$. Since $\mathcal{C}(N)$ is the smallest closed set containing $N$, and since $\mathcal{V}(N)$ is a closed set containing $N$, obviously then  $\mathcal{C}(N)\subseteq \mathcal{V}(N)$. 
For the reverse inclusion, if $\mathcal{C}(N)= \mathcal{N}^+(G)$, then 
\[ 
\mathcal{N}^+(G)=\mathcal{C}(N)\subseteq \mathcal{V}(N)\subseteq \mathcal{N}^+(G).
\] 
This proves that $\mathcal{V}(N)=\mathcal{C}(N)$. Suppose that $\mathcal{C}(N)\neq \mathcal{N}^+(G)$. Since $\mathcal{C}(N)$ is a closed set,  there exists an  index set, $\Lambda$, such that,  for each $\lambda\in\Lambda$, there is a positive integer $n_{\lambda}$ and normal subgroups $N_{\lambda 1},\dots, N_{\lambda n_\lambda}$ of $G$ such that 
\[
\mathcal{C}(N)={\bigcap_{\lambda\in\Lambda}}\left({\bigcup_{ i\,=1}^{ n_\lambda}}\mathcal{V}(N_{\lambda i})\right).
\]
Since  
$\mathcal{C}(N)\neq \mathcal{N}^+(G),$ we  assume that ${\bigcup_{ i\,=1}^{ n_\lambda}}\mathcal{V}(N_{\lambda i})$ is non-empty for each $\lambda$. Therefore, $N\in   {\bigcup_{ i\,=1}^{ n_\lambda}}\mathcal{V}(N_{\lambda i})$ for each $\lambda$, and hence $\mathcal{V}(N)\subseteq {\bigcup_{ i=1}^{ n_\lambda}}\mathcal{V}(N_{\lambda i})$, that is $\mathcal{V}(N)\subseteq \mathcal{C}(N)$ as desired. 

To obtain the uniqueness of the generic point, it is sufficant to prove that $\mathcal{N}^+(G)$ is a $T_0$-space. Let $N$ and $N'$ be two distinct elements of $\mathcal{N}^+(G)$. Then, without loss of generality, we may assume that $N\nsubseteq N'$. Therefore $\mathcal{V}(N)$ is a closed set containing $N$ and missing $N'$. 

4. Since $G\in\mathcal{N}(G),$  it follows that $G=\mathcal{V}(G)=\mathcal{C}(G),$ and therefore $\mathcal{N}(G) {\setminus}\mathcal{N}^+(G)$ is closed, and that implies $\mathcal{N}^+(G)$ is open. 
\end{proof}  

\begin{remark}
Among some distinguished classes of ideals of a ring endowed with coarse lower topology, in \cite{FGS22}, those are spectral have been studied. Similarly, it would be interesting to investigate and identify various classes of spectral spaces associated to normal subgroups of a group.
\end{remark}


\begin{thebibliography}{9} 

\bibitem{FGS22} C. A. Finocchiaro, A. Goswami, and D. Spirito, ``Distinguished classes of ideal spaces and their topological properties'', \textit{Comm. Algebra}, 51(4), (2023), 1752--1760. 

\bibitem{FGT22} A. Facchini, F. de Giovanni, and M. Trombetti, ``Spectra of groups'', \textit{Algebr. Represent. Theory}, 26(5), (2023), 1415--1431.

\bibitem{H69} 
M. Hochster, ``Prime ideal structure in commutative rings'', \textit{Trans. Am. Math. Soc.}, 142 (1969), 43--60.

\bibitem{P94} H. A. Priestley, ``Intrinsic spectral topologies'', in: \textit{Papers on general topology and applications}
(Flushing, NY, 1992), 728,  78--95, New York Acad. Sci., New York, 1994.

\end{thebibliography}
\end{document}